\def\pref#1{(\ref{#1})}
\def\p@enumii{}
\newtheorem{thm}{Theorem}[section]
\newtheorem{lem}[thm]{Lemma}
\newtheorem{prop}[thm]{Proposition}
\newtheorem{cor}[thm]{Corollary}
\theoremstyle{definition}
\newtheorem{defn}[thm]{Definition}
\newtheorem{ex}[thm]{Example}
\theoremstyle{remark}
\numberwithin{equation}{section}
\newcommand{\z}{\mathbb{Z}}
\newcommand{\n}{\mathbb{N}}
\newcommand{\G}{\mathcal{G}}
\newcommand{\D}{\mathcal{D}}
\newcommand{\FF}{\mathcal{F}}
\newcommand{\FG}{\mathcal{F}'}
\newcommand{\U}{\mathrm{U}}
\renewcommand{\b}[1]{\overline{#1}}
\newcommand{\rg}{\rangle}
\renewcommand{\lg}{\langle}
\newcommand{\se}{\subseteq}
\newcommand{\sm}{\setminus }
\newcommand{\give}{$\Rightarrow$}
\newcommand{\ifof}{if and only if }
\renewcommand{\l}{\left}
\renewcommand{\r}{\right}
\renewcommand{\b}{\big}
\newcommand{\ind}{\mathrm{Ind}}
\newenvironment{psmat}
{\left(\begin{smallmatrix}}
	{\end{smallmatrix}\right)}
\begin{document}
\title{Well-covered Unitary Cayley Graphs of Matrix Rings over Finite Fields and Applications}

\author{Shahin Rahimi$^1$ and Ashkan Nikseresht$^{2}$\\
\it\small Department of Mathematics, College of Sciences, Shiraz University, \\
\it\small 71457-44776, Shiraz, Iran\\
\it\small $^1$E-mail: shahin.rahimi.math@gmail.com\\
\it\small $^2$E-mail: ashkan\_nikseresht@yahoo.com}
\date{}

\maketitle
\begin{abstract}
Suppose that $F$ is a finite field and $R=M_n(F)$ is the ring of $n$-square matrices over $F$. Here we characterize
when the Cayley graph of the additive group of $R$ with respect to the set of invertible elements of $R$, called the
unitary Cayley graph of $R$, is well-covered. Then we apply this to characterize all finite rings with identity whose
unitary Cayley graph is well-covered or Cohen-Macaulay.
\end{abstract}

Keywords: unitary Cayley graph, well-covered graph, matrix rings, edge ideal. \\
\indent 2020 Mathematical Subject Classification: 05C25, 05C69, 15B33, 13H10

\section{Introduction}

In this paper all rings are assumed to be finite, associative and with a nonzero identity and $R$ always denotes a ring
and $F$ is a finite field. Also all graphs are simple and undirected.

Recall that the \emph{unitary Cayley graph} $\G(R)$ of $R$ is the (undirected) Cayley graph of the Abelian group
$(R,+)$ with respect to the set $\U(R)$ of unit elements of $R$. This means that the vertices of $\G(R)$ are elements
of $R$ and two vertices $x$ and $y$ are adjacent \ifof $x-y$ is invertible in $R$.  Recently, several researchers have
studied unitary Cayley graphs of rings, see for example \cite{G(R)genus,kiani,KianiLinAlg,UcayMat,yassemi,A lot,chen}.

A graph $G$ is called \emph{well-covered} when all maximal independent sets of $G$ have the same size. Well-covered
graphs are well-studied in graph theory from several view points such as algorithmic, algebraic or structural
characterization view points (see, for example, \cite{Brown, CWalker, Finbow, product, WellNew, yassemi, WellSurvey}).

In \cite{yassemi} well-covered unitary Cayley graphs of commutative rings are characterized. The main aim of this
research is to extend their results to non-commutative rings. Because of the Wedderburn-Artin Theorem on semisimple
rings, the main step of this work is to characterize when the unitary Cayley graph of the $n$-square matrix ring
$M_n(F)$ with entries in a finite field $F$ is well-covered. The answer to this question is the main result of Section
2 (see Theorem \ref{mnf}).

Recall that if $G$ is a graph on vertex set $[n]=\{1,\ldots, n\}$ and edge set $E(G)$ and $S=K[x_1,\ldots, x_n]$ where
$K$ is field, then the ideal $I(G)=\lg x_ix_j|{i,j}\in E(G) \rg$ of $S$ is called the \emph{edge ideal} of $G$. A graph
$G$ is called \emph{Cohen-Macaulay} (\emph{CM}, for short) over $K$ when the ring $S/I(G)$ is a CM ring (see, \cite{CM
ring} or \cite{hibi} for the algebraic background). In commutative algebra, edge ideals of graphs and CM graphs have
been studied extensively, see for example \cite{unitCM, tri-free, matchComp, CWalker, GorenCirc, hibi, LineGraphs,
Trung, yassemi} and the references therein.

In Section 3, we apply the aforementioned characterization to find out for which rings the graph $\G(R)$ is
well-covered or Cohen-Macaulay.

In the following lemma, we have collected two results that are essential for our work. Recall that the
\emph{independence number} $\alpha(G)$ of a graph $G$ is the cardinality of a maximum independent set of vertices and a
\emph{semisimple ring} is an Artinian ring with a zero Jacobson radical (see \cite[Theorem 15.20]{AndersonFuller}).

\begin{lem}\label{ess}
\begin{enumerate}
\item \label{ess 1} (\cite[Theorem 3.6]{kiani}) Let $F$ be a finite field and $R = M_n(F )$, where $n$ is a positive
    integer. Then $\alpha\b(\G(R)\b)=|F|^{n^2-n}$.

\item \label{ess 2} (Wedderburn–Artin Theorem, see \cite[Section 13]{AndersonFuller}) If $R$ is a semisimple ring
    then $R$ is isomorphic to $M_{n_1}(D_1) \times \cdots \times M_{n_r}(D_r)$ for some division rings $D_i$ and
    positive integers $n_i$.
\end{enumerate}
\end{lem}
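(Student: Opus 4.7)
For part (i), the plan is to establish the two inequalities $\alpha(\G(R)) \geq |F|^{n^2-n}$ and $\alpha(\G(R)) \leq |F|^{n^2-n}$ separately. For the lower bound I would exhibit an explicit independent set: fix a nonzero row vector $v \in F^n$ and set $S = \{A \in M_n(F) \mid vA = 0\}$. This is an additive subgroup of $R$ of order $|F|^{n(n-1)} = |F|^{n^2-n}$, and the difference of any two elements of $S$ still annihilates $v$ on the left, hence has a nontrivial left kernel and is singular. Thus $S$ is an independent set in $\G(R)$ of the claimed size.

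For the upper bound, I would first translate an arbitrary independent set $I$ so that $0 \in I$; every element of $I$ is then singular, and all pairwise differences remain so. A clean route is the Hoffman ratio bound: $\G(R)$ is $|\U(R)|$-regular on $|F|^{n^2}$ vertices, and its spectrum can be computed via the characters of the additive group $(R,+)$ evaluated on $\U(R)$. Identifying the smallest eigenvalue should yield $\alpha(\G(R)) \leq |F|^{n^2-n}$. I expect the character/Kloosterman-type sum computation to be the main technical obstacle. A more combinatorial alternative is to argue by induction on $n$, using a Schur-complement decomposition to show that any maximum independent set containing $0$ must coincide with the left annihilator of some nonzero vector; this reduces the problem to an $(n-1) \times n$ submatrix count matching the lower bound.

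Part (ii) is the classical Wedderburn--Artin theorem, so no new ideas are needed. I would decompose $R$ as a left module over itself into a finite direct sum of simple submodules (possible because $R$ is semisimple), group isomorphic simples so that $R \cong V_1^{n_1} \oplus \cdots \oplus V_r^{n_r}$ with the $V_i$ pairwise non-isomorphic, apply Schur's lemma to set $D_i = \mathrm{End}_R(V_i)$, and then exploit the isomorphism $R \cong \mathrm{End}_R(R)^{\mathrm{op}}$ together with the standard identification $\mathrm{End}_R(V_i^{n_i}) \cong M_{n_i}(D_i)$. Relabeling $D_i^{\mathrm{op}}$ as $D_i$ yields the stated product of matrix rings over division rings, and no serious obstacle arises here.
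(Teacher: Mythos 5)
First, note that the paper offers no proof of this lemma at all: both parts are quoted results, part (i) being \cite[Theorem 3.6]{kiani} and part (ii) the Wedderburn--Artin theorem, so there is no internal argument to compare against. Judged on its own merits, your treatment of part (ii) is the standard and correct proof of Wedderburn--Artin, and your lower bound in part (i) is complete: the left annihilator $S=\{A \mid vA=0\}$ of a fixed nonzero $v$ is an additive subgroup of order $|F|^{n^2-n}$ all of whose nonzero elements are singular, so it is independent in $\G(M_n(F))$.

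The genuine gap is the upper bound $\alpha(\G(M_n(F)))\leq |F|^{n^2-n}$, which is the entire content of the cited theorem. Neither of your two routes is carried out: you explicitly defer the character-sum computation needed to identify the least eigenvalue for the Hoffman bound, and your ``Schur-complement induction'' rests on the claim that a maximum independent set containing $0$ is the annihilator of a single vector --- but that is itself a nontrivial Erd\H{o}s--Ko--Rado-type theorem, not something that falls out of a determinant decomposition, so the reduction is circular as sketched. A clean way to close the gap without any eigenvalue computation is the clique--coclique bound: $\G(R)$ is vertex-transitive (it is a Cayley graph), so $\alpha(G)\,\omega(G)\leq |V(G)|$; embedding the field $F_{q^n}$ into $M_n(F_q)$ as a subring gives a clique of size $q^n$, since the difference of two distinct elements of a field is a unit, whence $\alpha\leq q^{n^2}/q^n=q^{n^2-n}$. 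With that (or an executed eigenvalue computation) in place, part (i) would be complete.
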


\section{Well-covered unitary Cayley graphs of matrix rings over finite fields}

In this section, we characterize $n$-square matrix rings over finite fields for which the unitary Cayley graph is
well-covered. To begin with, note that $\{x_1,x_2,\ldots,x_m\}$ is an independent set of $\G(R)$, exactly whenever
$\{0,x_2-x_1,\ldots,x_m-x_1\}$ is so. Therefore, without loss of generality, we consider independent sets that contain
zero. In addition, it is not hard to see that the unitary Cayley graph of a finite field is well-covered. Our first
result shows that $\G\b(M_n(F)\b)$ is well-covered for $n\leq 2$.

\begin{prop}\label{m2f}
For any finite field $F$, if $n\leq 2$, then the graph $\G\b(M_n(F)\b)$ is well-covered.
\end{prop}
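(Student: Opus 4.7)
The plan is to dispose of $n = 1$ by inspection and to handle $n = 2$ via a structural description of independent sets containing $0$. When $n = 1$, $M_1(F) = F$ has every nonzero element a unit, so $\G(F)$ is the complete graph on $|F|$ vertices; its maximal independent sets are singletons, hence it is trivially well-covered.

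For $n = 2$, let $q = |F|$. Lemma~\ref{ess}~(i) gives $\alpha(\G(M_2(F))) = q^2$, so it suffices to show that every maximal independent set has size exactly $q^2$. As noted before the proposition, I may assume $0 \in S$ for a maximal independent set $S$. Every nonzero element of $S$ is then nonadjacent to $0$ and hence singular, and differences of distinct elements of $S$ are singular, so $S$ consists of $2\times 2$ matrices of rank at most $1$ with pairwise singular differences.

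The technical heart is a sunflower-type dichotomy powered by a direct determinant computation. Writing two nonzero rank-$1$ matrices as $x = uv^T$ and $y = u'(v')^T$, one factors $x - y$ as a product of two $2\times 2$ matrices with columns $u, -u'$ and $v, v'$ respectively, which gives $\det(x - y) = -\det(u, u')\det(v, v')$. Thus $x - y$ is singular exactly when $\mathrm{im}(x) = \mathrm{im}(y)$ or $\ker(x) = \ker(y)$. Fixing a rank-$1$ element $x_0 \in S$ with image $L_0$ and kernel $K_0$, this forces each rank-$1$ element of $S$ into the class $A$ of matrices with image $L_0$ or the class $B$ of matrices with kernel $K_0$. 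If both classes contained a point absent from the other, then picking $x \in A \sm B$ and $y \in B \sm A$ would give $\mathrm{im}(x) = L_0 \ne \mathrm{im}(y)$ and $\ker(x) \ne K_0 = \ker(y)$, hence $\det(x - y) \ne 0$, contradicting independence. So $A \se B$ or $B \se A$, and after transposing if necessary I may assume every rank-$1$ element of $S$ has image $L_0$.

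Consequently $S$ lies in $T = \{M \in M_2(F) : \mathrm{im}(M) \se L_0\}$, which is a $2$-dimensional $F$-subspace of $M_2(F)$ of cardinality $q^2$, consists entirely of singular matrices, and is closed under subtraction; hence $T$ is itself an independent set in $\G(M_2(F))$. Maximality of $S$ forces $S = T$, so $|S| = q^2$, establishing well-coveredness. The genuinely delicate step is the sunflower dichotomy that funnels $S$ into one of the two canonical $q^2$-element families; once the determinant identity is at hand, the rest is essentially bookkeeping.
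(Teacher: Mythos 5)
Your proof is correct, but it takes a genuinely different route from the paper's. The paper parametrizes the singular elements of $M_2(F)$ explicitly by their rows (first row zero, or second row a scalar multiple of the first) and runs a three-case analysis on an independent set $A\ni 0$ with $|A|<|F|^2$, in each case either producing a concrete matrix that can be added to $A$ or exhibiting an independent superset of size $|F|^2$; combined with Lemma \ref{ess}\pref{ess 1} this gives well-coveredness. You instead factor a difference of rank-one matrices, writing $uv^T-u'(v')^T$ as the product of the $2\times 2$ matrix with columns $u,-u'$ and the $2\times 2$ matrix with rows $v^T,(v')^T$, obtaining $\det\big(uv^T-u'(v')^T\big)=-\det(u,u')\det(v,v')$; thus adjacency of rank-one matrices is governed purely by coincidence of images or kernels, and your dichotomy traps any maximal independent set containing $0$ inside one of the two canonical $q^2$-element subspaces $\{M:\mathrm{im}\,M\subseteq L\}$ or $\{M:K\subseteq\ker M\}$, each of which is itself independent, so maximality forces equality. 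Your argument is coordinate-free and yields slightly more, namely a complete classification of the maximal independent sets through $0$ (exactly these subspaces, hence in general their translates), whereas the paper's computation is more elementary and matches the explicit-matrix style it reuses later for the reduced $k$-diagonals; both are intrinsically limited to $n=2$, since for $n\geq 3$ singular matrices need not have rank one. One cosmetic point: if $S=\{0\}$ there is no rank-one element $x_0$ to fix, but then $S$ is trivially non-maximal as it sits inside any such subspace $T$, so nothing is lost.
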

\begin{proof}
Clearly, $\G\b(M_1(F)\b)=\G(F)$ is a complete graph and well-covered. Let $A$ be an independent set of $\G\b(M_2(F)\b)$
containing zero. Also, let $|A|=m+1$ for some $0\leq m <|F|^2-1$. Note that any non-unit element of $M_2(F)$ has either
the form $\begin{psmat}
		0 \\
		r
	\end{psmat} $ or $\begin{psmat}
		r \\
		\alpha r
\end{psmat} $, for some row vector $r\in F^2$ and $\alpha \in F$. We show that $A$ cannot be a maximal independent set. To that aim,
there are three cases to investigate:
	
	Case (i): $A=\{\begin{psmat}
		0 \\
		0
	\end{psmat},\begin{psmat}
		0 \\
		r_1
	\end{psmat},\ldots,\begin{psmat}
		0 \\
		r_m
\end{psmat}\} $. Since $m<|F|^2-1$, we can find a nonzero row vector $r_{m+1}$ distinct from all $r_i$'s. Therefore, $A\cup
\{\begin{psmat}
		0 \\
		r_{m+1}
	\end{psmat}\}$ is an independent set, as claimed.
	
	Case (ii): $A=\{\begin{psmat}
		0 \\
		0
	\end{psmat},\begin{psmat}
		r_1 \\
		\alpha_1 r_1
	\end{psmat},\ldots,\begin{psmat}
		r_m \\
		\alpha_m r_m
	\end{psmat}\}$ where $\alpha_i\in F$ and $0\neq r_i \in F^2$ for all $1\leq i \leq m$. Since $\begin{psmat}
		r_1 \\
		\alpha_1 r_1
	\end{psmat}-\begin{psmat}
		r_i \\
		\alpha_i r_i
\end{psmat}$ is not invertible, we can see that $\alpha_1r_1-\alpha_ir_i= \alpha(r_1-r_i)$ for some $\alpha\in F$ or
$r_1=r_i$. Thus it can be concluded that $\alpha_i=\alpha_1$ or $r_i=\beta_i r_1$ for some $\beta_i\in F$.

Assume that $r_i$ is not a multiple of $r_1$ and $r_j=\beta_j r_1$ for some $1\leq i,j\leq m$. Then since
    $\begin{psmat}
		r_i \\
		\alpha_1r_i
	\end{psmat}-
	\begin{psmat}
		\beta_jr_1 \\
		\alpha_j\beta_jr_1
\end{psmat}$ is not invertible, we have  $\alpha_1r_i-\alpha_j\beta_jr_1=\gamma (r_i-\beta_jr_1) $ for some $\gamma\in
F$. This yields that $\alpha_1=\gamma=\alpha_j$. Therefore, either there exists an $i$ with $r_i$ not a multiple of $r_1$
and $\alpha_j=\alpha_1$ for all $j$, that is, $$A=\l\{\begin{psmat}
		0 \\
		0
	\end{psmat},\begin{psmat}
		r_1 \\
		\alpha_1 r_1
	\end{psmat},\begin{psmat}
		r_2 \\
		\alpha_1 r_2
	\end{psmat},\ldots,\begin{psmat}
		r_m \\
		\alpha_1 r_m
	\end{psmat}\r\},$$
 or $r_i$ is a multiple of $r_1$ for all $i$, that is,
	$$A=\{\begin{psmat}
		0 \\
		0
	\end{psmat},\begin{psmat}
		r_1 \\
		\alpha_1 r_1
	\end{psmat},\begin{psmat}
		\beta_2r_1 \\
		\alpha_2\beta_2 r_1
	\end{psmat},\ldots,\begin{psmat}
		\beta_mr_1 \\
		\alpha_m\beta_m r_1
	\end{psmat}\}.$$
	In the former case, since $m<|F|^2-1$, choosing $r_{m+1}\in F^2$ distinct from all $r_i$'s yields that $A\cup\{\begin{psmat}
		r_{m+1} \\
		\alpha_1 r_{m+1}
	\end{psmat}\}$
is an independent set, as desired. In the latter case, $A$ is strictly contained in the independent set $\{\begin{psmat}		\alpha r_1 \\
		\beta r_1
	\end{psmat}\mid \alpha , \beta \in F\}$, as required.
	
	Case (iii): $A=\{\begin{psmat}
		0 \\
		0
	\end{psmat},\begin{psmat}
		0 \\
		r_1
	\end{psmat},\ldots,\begin{psmat}
		0 \\
		r_s
	\end{psmat},\begin{psmat}
		r_{s+1} \\
		\alpha_{s+1}r_{s+1}
	\end{psmat},\ldots,\begin{psmat}
		r_m \\
		\alpha_mr_m
\end{psmat}\}$ where $r_i\ne 0$ for all $1\leq i \leq m$ and $\alpha_i\in F$ for $s+1\leq i \leq m$. Since
$\begin{psmat}
		r_j \\
		\alpha_j r_j
	\end{psmat}-\begin{psmat}
		0 \\
		r_k
\end{psmat}$ is not invertible for $1\leq k \leq s$ and $s+1\leq j \leq m$, we infer that $r_k$
is a multiple of $r_j$ for all $s+1\leq j \leq m$ and $1\leq k\leq s$. Hence all $r_i$ are multiples of $r_1$. Again,
$A$ is strictly contained in the independent set $\{\begin{psmat}
		\alpha r_1 \\
		\beta r_1
	\end{psmat}\mid \alpha , \beta \in F\}$, as promised.
	
Therefore, we have shown that any maximal independent set in $\G\b(M_2(F)\b)$ possesses at least $|F|^2$ elements. Now, the
result follows from Lemma \ref{ess}\pref{ess 1}.
\end{proof}

Next we prove the converse of Proposition \ref{m2f}. To this end, we introduce some new tools which we call
reduced $k$-diagonal matrices.

\begin{defn}
Let $n$ be a positive integer and $1\leq k , l\leq n$. By $D_{k,l}(a_1,\ldots,a_{n-1})$, we mean an $n\times n$ matrix,
in which, the entry $(i,j)$ is $a_{i-l}$ whenever $j-i=k$ and $i\ne l$, else it is zero (with modulo $n$ arithmetic,
where $0$ is replaced with $n$). We say that a matrix $D$ is reduced $k$-diagonal if $D=D_{k,l}(a_1,\ldots,a_{n-1})$
for some suitable $n$, $k$ and $l$. Also, for convenience, we let
$D_{k,k}(a_1,\ldots,a_{n-1})=D_{k}(a_1,\ldots,a_{n-1})$.
\end{defn}

\begin{ex}
Let $n=3$ and $D_{k,l}=D_{k,l}(a_1,\ldots,a_{n-1})$. Then we have:
$$D_{1,1}=\begin{pmatrix}
	0 & 0 & 0 \\
	0 & 0 & a_1 \\
	a_2 & 0 & 0
\end{pmatrix},
D_{1,2}=\begin{pmatrix}
	0 & a_2 & 0 \\
	0 & 0 & 0 \\
	a_1 & 0 & 0
\end{pmatrix},
D_{1,3}=\begin{pmatrix}
	0 & a_1 & 0 \\
	0 & 0 & a_2 \\
	0 & 0 & 0
\end{pmatrix},$$
$$D_{2,1}=\begin{pmatrix}
	0 & 0 & 0 \\
	a_1 & 0 & 0 \\
	0 & a_2 & 0
\end{pmatrix},
D_{2,2}=\begin{pmatrix}
	0 & 0 & a_2 \\
	0 & 0 & 0 \\
	0 & a_1 & 0
\end{pmatrix},
D_{2,3}=\begin{pmatrix}
	0 & 0 & a_1 \\
	a_2 & 0 & 0 \\
	0 & 0 & 0
\end{pmatrix},$$
$$D_{3,1}=\begin{pmatrix}
	0 & 0 & 0 \\
	0 & a_1 & 0 \\
	0 & 0 & a_2
\end{pmatrix},
D_{3,2}=\begin{pmatrix}
	a_2 & 0 & 0 \\
	0 & 0 & 0 \\
	0 & 0 & a_1
\end{pmatrix},
D_{3,3}=\begin{pmatrix}
	a_1 & 0 & 0 \\
	0 & a_2 & 0 \\
	0 & 0 & 0
\end{pmatrix}.$$	
Note that the $l$-th row in $D_{k,l}(a_1, \ldots, a_{n-1})$ is zero. Also, it can be seen that
$D_k(a_1,\ldots,a_{n-1})$ where $n=4$ is one of the followings, depending on $k$:
$$\begin{pmatrix}
	0 & 0 & 0 & 0\\
	0 & 0 & a_1 & 0\\
	0 & 0 & 0 & a_2\\
	a_3 & 0 & 0 & 0
\end{pmatrix},
\begin{pmatrix}
	0 & 0 & a_3 & 0\\
	0 & 0 & 0 & 0\\
	a_1 & 0 & 0 & 0\\
	0 & a_2 & 0 & 0
\end{pmatrix},
\begin{pmatrix}
	0 & 0 & 0 & a_2\\
	a_3 & 0 & 0 & 0\\
	0 & 0 & 0 & 0\\
	0 & 0 & a_1 & 0
\end{pmatrix},
\begin{pmatrix}
	a_1 & 0 & 0 & 0\\
	0 & a_2 & 0 & 0\\
	0 & 0 & a_3 & 0\\
	0 & 0 & 0 & 0
\end{pmatrix}.$$
\end{ex}

In the next lemma, we  construct an  independent set in $\G\b(M_n(F)\b)$ by using the reduced $k$-diagonals.

\begin{lem}\label{dk}
Let $F$ be a finite field and $n$ be a positive integer. Then $\D=\{D_k(a_1,\ldots,a_{n-1})\mid 1\leq k \leq n , a_i \in
F\}$ is an independent set of $\G\b(M_n(F)\b)$.
\end{lem}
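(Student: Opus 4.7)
The main structural fact I would exploit is that, by the clause ``$i \neq l$'' in the definition, each matrix $D_k(a_1,\ldots,a_{n-1}) = D_{k,k}(a_1,\ldots,a_{n-1})$ has its $k$-th row identically zero; equivalently, every row $i \neq k$ has at most one nonzero entry, located at column $i+k \pmod n$ with value $a_{i-k}$. The plan is to show that the difference of any two distinct elements of $\mathcal{D}$ inherits enough of this zero-row structure to be singular in $M_n(F)$.

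\textbf{Step 1 (same index $k$).} If $M_1 = D_k(a_1,\ldots,a_{n-1})$ and $M_2 = D_k(a'_1,\ldots,a'_{n-1})$ are distinct elements of $\mathcal{D}$ with the same $k$, then
$$M_1 - M_2 = D_k(a_1 - a'_1,\ldots,a_{n-1}-a'_{n-1})$$
is again a reduced $k$-diagonal matrix, so its $k$-th row is zero and $M_1 - M_2$ is singular.

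\textbf{Step 2 (different indices $k \neq k'$).} Write $M = D_k(a_1,\ldots,a_{n-1}) - D_{k'}(a'_1,\ldots,a'_{n-1})$. The key observation is what happens to rows $k$ and $k'$ of $M$:
\begin{itemize}
\item Row $k$ of $D_k(\cdot)$ is zero, so row $k$ of $M$ coincides with the negative of row $k$ of $D_{k'}(\cdot)$. Since $k \neq k'$, this row has at most one nonzero entry, located at column $(k+k') \pmod n$.
\item Symmetrically, row $k'$ of $D_{k'}(\cdot)$ is zero, so row $k'$ of $M$ has at most one nonzero entry, also at column $(k+k') \pmod n$.
\end{itemize}
Thus rows $k$ and $k'$ of $M$ are both supported in the single column $(k+k') \pmod n$, making them scalar multiples of the same standard basis row vector. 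They are therefore linearly dependent, and $M$ is singular.

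Steps 1 and 2 together show that for any two distinct $M_1, M_2 \in \mathcal{D}$, the difference $M_1 - M_2$ is not invertible, so $\mathcal{D}$ is an independent set of $\mathcal{G}(M_n(F))$. I do not anticipate a serious obstacle; the whole argument is a direct unpacking of the definition, the only real content being the observation that the two ``deleted rows'' in the difference land on a common column.
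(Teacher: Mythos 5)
Your proposal is correct and follows essentially the same route as the paper's proof: the same-index case is handled by the zero $k$-th row, and the distinct-index case by observing that rows $k$ and $k'$ of the difference are each supported only in column $k+k' \pmod n$, hence linearly dependent. No gaps.
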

\begin{proof}
Let $D_k=D_k(a_1,\ldots,a_{n-1})$ and $D_l=D_l(b_1,\ldots,b_{n-1})$ be two elements of $\D$. If $k=l$, it can be seen
that $D_k-D_l$ contains a zero row. Hence, assume that $k\ne l$. Observe that there are exactly two rows in $D_k-D_l$,
in which only one entry is possibly nonzero, the $k$-th and the $l$-th rows. These two entries are in the same column,
that is, the $(k+l)$-th column. Therefore, the $k$-th and the $l$-th rows are linearly dependent and $D_k-D_l$ is not
an invertible matrix in $M_n(F)$, which completes the proof.
\end{proof}

Note that the independent set $\D$ of the previous lemma contains the zero matrix as $D_k(0,\ldots, 0)$.
\begin{lem}\label{comrows}
Suppose that $m<n$ are positive integers, $F$ is a finite field, $D_k=D_k(a_1,\ldots,a_{n-1})$ is an element of $\D$ as
in Lemma \ref{dk}, and $A$ is an element of any maximal independent set of $\G\b(M_n(F)\b)$ containing $\D$. Then, the
matrix composed of any $m$ rows of $D_k$ and the other rows of  $A$  is not invertible.
\end{lem}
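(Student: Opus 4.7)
The plan is to fix an arbitrary $m$-subset $S=\{i_1,\ldots,i_m\}$ of $\{1,\ldots,n\}$ and let $M$ denote the $n\times n$ matrix whose row at each position $i_j\in S$ coincides with the $i_j$-th row of $D_k$, while every other row of $M$ is the corresponding row of $A$. The goal is to show $\det M=0$.

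If $k\in S$, then row $k$ of $M$ equals the zero $k$-th row of $D_k$, so $M$ is trivially singular. Assume henceforth that $k\notin S$. Then the row of $M$ at each position $i_j\in S$ has its unique nonzero entry $a_{i_j-k}$ in column $i_j+k$ (all indices read modulo $n$, with $0$ replaced by $n$), and the columns $i_j+k$ are pairwise distinct. Laplace expansion of $\det M$ along the rows indexed by $S$ therefore gives
$$\det M=\varepsilon\cdot\prod_{j=1}^{m}a_{i_j-k}\cdot\det(A_S),$$
where $\varepsilon\in\{\pm 1\}$ and $A_S$ is the $(n-m)\times(n-m)$ submatrix of $A$ obtained by deleting the rows indexed by $S$ and the columns indexed by $S+k$. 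So it suffices to prove $\det(A_S)=0$ for every $S\se\{1,\ldots,n\}\sm\{k\}$.

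To establish this I would exploit the hypothesis that $\D\cup\{A\}$ lies in an independent set: for every $(b_1,\ldots,b_{n-1})\in F^{n-1}$ the matrix $A-D_k(b_1,\ldots,b_{n-1})$ is non-invertible, so the polynomial
$$f(b_1,\ldots,b_{n-1}):=\det\b(A-D_k(b_1,\ldots,b_{n-1})\b)$$
vanishes on the whole of $F^{n-1}$. Since each $b_s$ appears in exactly one entry of $D_k(b)$, the polynomial $f$ is multilinear in $b_1,\ldots,b_{n-1}$; and because the multilinear monomials in $n-1$ variables are linearly independent as functions on $\{0,1\}^{n-1}$, such an $f$ must be identically zero. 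Expanding $f$ by multilinearity of the determinant in the rows that carry the $b_s$'s, one finds that the coefficient of $\prod_{s\in T}(-b_s)$ is, up to a sign, the minor of $A$ obtained by deleting the rows in $T+k$ and the columns in $T+2k$.

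As $T$ ranges over all subsets of $\{1,\ldots,n-1\}$, the translate $T+k$ ranges over all subsets of $\{1,\ldots,n\}\sm\{k\}$; in particular, choosing $T=S-k$ (which is a subset of $\{1,\ldots,n-1\}$ precisely because $k\notin S$) yields $T+k=S$ and $T+2k=S+k$, so the corresponding coefficient of $f$ equals (up to sign) $\det(A_S)$. The vanishing of $f$ forces this coefficient, and hence $\det M$, to be zero. The main obstacle I anticipate is the index bookkeeping modulo $n$ — in particular, verifying that the pair of index sets $(T+k,T+2k)$ produced by the multilinear expansion of $f$ matches the pair $(S,S+k)$ appearing in the Laplace expansion of $\det M$; everything else is a routine consequence of the multilinearity of the determinant.
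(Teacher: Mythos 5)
Your proof is correct, but it takes a genuinely different route from the paper's. The paper argues by induction on $m$: using row-linearity of the determinant it telescopes the determinant of the mixed matrix into $(-1)^m\det(A-B)$, where $B$ is the element of $\D$ obtained from $D_k(a_1,\ldots,a_{n-1})$ by zeroing the parameters attached to the rows that were \emph{not} replaced; that determinant vanishes simply because $A$ is not adjacent to $B$ in $\G\b(M_n(F)\b)$. You instead reduce $\det M$ by Laplace expansion along the replaced rows to a complementary minor of $A$, and then kill that minor by noting that $f(b)=\det\b(A-D_k(b_1,\ldots,b_{n-1})\b)$ is a multilinear polynomial vanishing on all of $F^{n-1}$ (hence identically zero, e.g.\ by restricting to $\{0,1\}^{n-1}$), and that its coefficients are, up to sign, exactly the minors of $A$ complementary to row set $T+k$ and column set $T+2k$. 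Your index bookkeeping does check out: $b_s$ sits in position $(s+k,s+2k)$, so taking $T=S-k$ (legitimate since $k\notin S$, which you handle separately via the zero $k$-th row of $D_k$) matches the pair $(S,S+k)$ from your Laplace expansion. Both arguments ultimately rest on the same two facts --- row-multilinearity of the determinant and the vanishing of $\det(A-D)$ for every $D\in\D$ --- but yours extracts all the information at once as polynomial coefficients and in fact proves the slightly stronger statement that \emph{every} such complementary minor of $A$ vanishes, whereas the paper's telescoping induction is more elementary and self-contained, never leaving the realm of explicit determinant manipulations.
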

\begin{proof}
Denote the rows of $A$ and $D_k$ by $r_i$ and $d_i$, respectively. We use induction on $m$. First, letting $m=1$, we may
assume that the first row of the mentioned matrix is $d_1$ and $k\ne 1$. Note that $\det(A)$ and $\det\b(A-D_l(b_1,
\ldots, b_{n-1})\b)$ are zero for every $l$ and $b_1, \ldots, b_{n-1}$, by assumption. So:
	$$\begin{vmatrix}
		d_1 \\
		r_2 \\
		\vdots \\
		r_n
	\end{vmatrix}=\begin{vmatrix}
		r_1 \\
		r_2 \\
		\vdots \\
		r_n
	\end{vmatrix}-\begin{vmatrix}
		r_1-d_1 \\
		r_2 \\
		\vdots \\
		r_n
	\end{vmatrix}=\det(A)-\det\b(A-D_k(0,\ldots,a_{1-k},\ldots0)\b)=0.$$
	
Therefore, in this case we get the desired
result. Now, let $m>1$ and without loss of generality, let the $m$ rows from $D_k$ be $d_1,\ldots, d_m$. Using row
linearity of determinant, it can be said that:

		$$\begin{vmatrix}
			d_1 \\
			d_2 \\
			\vdots \\
			d_m \\
			r_{m+1} \\
			\vdots \\
			r_n
		\end{vmatrix} =\begin{vmatrix}
			r_1 \\
			d_2 \\
			\vdots \\
			d_m \\
			r_{m+1} \\
			\vdots \\
			r_n
		\end{vmatrix} - \begin{vmatrix}
			r_1-d_1 \\
			d_2 \\
			\vdots \\
			d_m \\
			r_{m+1} \\
			\vdots \\
			r_n
		\end{vmatrix}.$$
		
By the induction hypothesis the first term is zero, so:
		
			\begin{align*}
			\begin{vmatrix}
				d_1 \\
				d_2 \\
				\vdots \\
				d_m \\
				r_{m+1} \\
				\vdots \\
				r_n
			\end{vmatrix}
		 &= -\begin{vmatrix}
			r_1-d_1 \\
			d_2 \\
			\vdots \\
			d_m \\
			r_{m+1} \\
			\vdots \\
			r_n
		\end{vmatrix}
		=-\begin{vmatrix}
			r_1-d_1 \\
			r_2 \\
			\vdots \\
			d_m \\
			r_{m+1} \\
			\vdots \\
			r_n
		\end{vmatrix}+\begin{vmatrix}
			r_1-d_1 \\
			r_2-d_2 \\
			\vdots \\
			d_m \\
			r_{m+1} \\
			\vdots \\
			r_n
		\end{vmatrix}\\
		&=  -\begin{vmatrix}
			r_1\\
			r_2 \\
			\vdots \\
			d_m \\
			r_{m+1} \\
			\vdots \\
			r_n
		\end{vmatrix}+\begin{vmatrix}
			d_1 \\
			r_2 \\
			\vdots \\
			d_m \\
			r_{m+1} \\
			\vdots \\
			r_n
		\end{vmatrix}+\begin{vmatrix}
			r_1-d_1 \\
			r_2-d_2 \\
			\vdots \\
			d_m \\
			r_{m+1} \\
			\vdots \\
			r_n
		\end{vmatrix} =(-1)^2\begin{vmatrix}
			r_1-d_1 \\
			r_2-d_2 \\
			\vdots \\
			d_m \\
			r_{m+1} \\
			\vdots \\
			r_n
		\end{vmatrix},\end{align*}

	where the last equality follows from the induction hypothesis.
	Continuing this way, one can see that:
	$$\begin{vmatrix}
		d_1 \\
		d_2 \\
		\vdots \\
		d_m \\
		r_{m+1} \\
		\vdots \\
		r_n
	\end{vmatrix}=(-1)\begin{vmatrix}
		r_1-d_1 \\
		d_2 \\
		\vdots \\
		d_m \\
		r_{m+1} \\
		\vdots \\
		r_n
	\end{vmatrix}=(-1)^2\begin{vmatrix}
		r_1-d_1 \\
		r_2-d_2 \\
		\vdots \\
		d_m \\
		r_{m+1} \\
		\vdots \\
		r_n
	\end{vmatrix}=\cdots=(-1)^m\begin{vmatrix}
		r_1-d_1 \\
		r_2-d_2 \\
		\vdots \\
		r_m-d_m \\
		r_{m+1} \\
		\vdots \\
		r_n
	\end{vmatrix}.$$

Now, the obtained value is the determinant of the difference between $A$ and the matrix $B$ obtained from
$D_k(a_1,\ldots, a_{n-1})$ by replacing the $a_i$'s which correspond to the last $n-m$ rows with zero. Noting that
$B\in \D$, the assertion follows.
\end{proof}

Finally, we can present the main result of this section.

\begin{thm}\label{mnf}
	Let $F$ be a finite field. Then, the graph $\G\b(M_n(F)\b)$ is  well-covered if and only if $n\leq 2$.
\end{thm}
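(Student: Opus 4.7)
The ``if'' direction is Proposition \ref{m2f}. For the converse I would prove the contrapositive: for $n \ge 3$, $\G\b(M_n(F)\b)$ admits a maximal independent set of size strictly less than $\alpha\b(\G(M_n(F))\b) = |F|^{n^2-n}$ (the equality being Lemma \ref{ess}\pref{ess 1}), which defeats well-coveredness since a maximum independent set of size $|F|^{n^2-n}$ always exists.

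Start by extending the independent set $\D$ of Lemma \ref{dk} to a maximal independent set $M$ of $\G\b(M_n(F)\b)$. Apply Lemma \ref{comrows} with $m = n - 1$ to any $A \in M$: fix $k \in \{1, \ldots, n\}$, choose $D_k(a_1, \ldots, a_{n-1}) \in \D$ with all $a_i \ne 0$, and take the $n - 1$ rows of $D_k$ indexed by $j \ne k$, so the single remaining row of the composed matrix (in position $k$) is the $k$-th row of $A$. These $n - 1$ rows from $D_k$ are the vectors $a_{j-k}\, e_{j + k \bmod n}$ for $j \ne k$, which are nonzero scalar multiples of the distinct standard basis vectors $\{e_l : l \ne 2k \bmod n\}$ and therefore span the hyperplane $H_k := \{v \in F^n : v_{2k \bmod n} = 0\}$. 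The singularity guaranteed by Lemma \ref{comrows} then forces the $k$-th row of $A$ into $H_k$, i.e., $A_{k, 2k \bmod n} = 0$. Letting $k$ vary, $M \subseteq V$, where $V := \{A \in M_n(F) : A_{k, 2k \bmod n} = 0 \text{ for all } k\}$ is an $F$-subspace of dimension $n^2 - n$, hence $|V| = |F|^{n^2 - n} = \alpha$.

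The crux is then to show that $V$ is \emph{not} itself an independent set of $\G\b(M_n(F)\b)$: combined with $M \subseteq V$ this gives $M \subsetneq V$, and so $|M| < \alpha$, as required. Since $V$ is a subspace, it suffices to exhibit one invertible matrix in $V$, and it is in fact enough to produce a permutation matrix $P_\pi \in V$, i.e., a $\pi \in S_n$ with $\pi(k) \ne 2k \bmod n$ for every $k$. I would invoke Hall's marriage theorem on the bipartite graph where row $k$ is joined to all columns $\ne 2k \bmod n$: each row has degree $n - 1$, and a subset $S$ of rows has $|N(S)| < n$ only when the map $k \mapsto 2k \bmod n$ is constant on $S$. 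Since this map is at most $2$-to-$1$, such $S$ satisfy $|S| \le 2$, yielding $|N(S)| = n - 1 \ge 2 \ge |S|$ for $n \ge 3$. Hall's condition holds and the desired $\pi$ exists. (Pleasantly, for $n = 2$ Hall's condition fails because $k \mapsto 2k \bmod 2$ is constant, and in that case one checks that $V$ actually \emph{is} an independent set, consistent with Proposition \ref{m2f}.)

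The one delicate step is distilling the clean pointwise constraint $A_{k, 2k \bmod n} = 0$ out of Lemma \ref{comrows} by making a concrete choice of the $a_i$; once that is in hand, the subspace dimension count and the combinatorial existence of $\pi$ via Hall are routine.
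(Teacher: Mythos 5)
Your proposal is correct and follows essentially the same route as the paper: extend $\D$ to a maximal independent set, use Lemma \ref{comrows} to force the zero pattern $A_{k,2k \bmod n}=0$, count the matrices with that pattern, and exhibit an invertible matrix with the pattern to show the maximal independent set falls short of $\alpha=|F|^{n^2-n}$. The only cosmetic difference is in the last step, where the paper simply writes down the explicit permutation matrix $\begin{psmat}I_{n-2}&0\\0&I^*\end{psmat}$ (fixing $1,\dots,n-2$ and swapping the last two indices) instead of invoking Hall's theorem to prove one exists.
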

\begin{proof}
	Taking Proposition \ref{m2f} into account, we only have to show the implication.
Suppose $n>2$ and let $M$ be any maximal independent set of $\G\b(M_n(F)\b)$ containing $\D$ as in Lemma \ref{dk} and $A\in M$. Let $1\leq k\leq n$. The
determinant of the matrix which is obtained by replacing the $k$-th row of $D_k(1,\ldots,1)$ with the $k$-th row of $A$
is $\pm a_{k,2k}$ which must be zero by Lemma \ref{comrows} (note that here $2k$ is calculated  modulo $n$). Therefore, $M$ has the property that each of its elements has $n$ zeros in
entries $(k,2k)$ for each $1\leq k \leq n$. Note that there are $|F|^{n^2-n}$ such matrices. On the other hand, consider the invertible matrix
	$\begin{psmat}
		I_{n-2}& 0\\
		0& I^*
	\end{psmat},$ where $I_{n-2}$ is the identity matrix of order $n-2$ and $I^*=\begin{psmat}
		0& 1\\
		1& 0
\end{psmat}$. This matrix has the aforementioned property (since $n>2$) and is not in $M$, thus $|M| < |F|^{n^2-n}$. Now, Lemma \ref{ess}\pref{ess 1} allows us to
conclude the result.
\end{proof}

\section{Well-covered and Cohen-Macaulay unitary Cayley graphs of finite rings}
In this section, we utilize the results of the previous section to characterize finite rings for which the unitary Cayley graph is well-covered or
Cohen-Macaulay.

Initially, we state a crucial tool that allows us to focus on semisimple rings instead of arbitrary rings for studying
well-coveredness. In addition, this proposition generalizes \cite[Corollary 2.4]{yassemi} to all finite rings using
different methods. Here $J(R)$ denotes the Jacobson radical of $R$ (see \cite[p. 120]{AndersonFuller}).

\begin{prop}\label{r to rj}
Suppose that $R$ is a finite ring, $A$ is a subset of $R$, and $\bar{A}$ denotes the image of $A$ in $R/J(R)$. The
following statements hold.
	\begin{enumerate}
\item \label{r to rj i} $A$ is a maximal independent set of $\G(R)$ if and only if $\bar{A}$ is a maximal independent
set of $\G\b(R/J(R)\b)$ and $A=A+J(R)$.
		
\item \label{r to rj ii} $\bar{A}$ is a maximal independent set of $\G\b(R/J(R)\b)$  if and only if $A+J(R)$ is a maximal
independent set of $\G(R)$.
	\end{enumerate}
	In particular, $\G(R)$ is a well-covered graph if and only if so is $\G\b(R/J(R)\b)$.
\end{prop}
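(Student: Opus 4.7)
My approach rests on one arithmetic observation: in any ring, $1 + J(R) \subseteq \U(R)$, so any preimage in $R$ of a unit of $R/J(R)$ is already a unit; combined with the trivial direction, this gives $r \in \U(R)$ iff $\bar r \in \U(R/J(R))$. Equivalently, $x - y \in \U(R)$ iff $\bar x - \bar y \in \U(R/J(R))$, so adjacency in $\G(R)$ is detected modulo $J(R)$, and in particular $J(R)$ contains no nonzero units. From this I immediately get that $A$ is independent in $\G(R)$ iff $\bar A$ is independent in $\G(R/J(R))$: for distinct $x \ne y$ in $A$, either $\bar x = \bar y$ (so $x - y \in J(R)\setminus\{0\}$, a non-unit) or $\bar x \ne \bar y$ (reducing to the condition in $R/J(R)$); the reverse direction is immediate.

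For part \pref{r to rj i}, the forward implication is the crux. Assuming $A$ is maximal independent, I first show $A = A + J(R)$: for $a \in A$ and $j \in J(R)$, one has $\overline{a+j} = \bar a$, so for every $a' \in A$ the differences $(a+j) - a'$ and $a - a'$ share the same image modulo $J(R)$. If $a \ne a'$, this common image is a non-unit by independence of $A$; if $a = a'$, the difference is $j \in J(R)$, also a non-unit. Hence $a+j$ is non-adjacent to every element of $A$, and maximality forces $a + j \in A$. Maximality of $\bar A$ then follows: for $\bar z \notin \bar A$, the coset-closure of $A$ gives $z \notin A$, so by maximality some $a \in A$ is adjacent to $z$, and projecting yields $\bar a \in \bar A$ adjacent to $\bar z$. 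The converse of \pref{r to rj i} is routine: when $A$ is a union of $J(R)$-cosets and $\bar A$ is maximal, any $z \notin A$ has $\bar z \notin \bar A$, and a witness $\bar a \in \bar A$ lifts to an $a \in A$ adjacent to $z$.

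Part \pref{r to rj ii} is then a formal consequence of \pref{r to rj i} applied to $A' = A + J(R)$, which satisfies $A' + J(R) = A'$ and $\overline{A'} = \bar A$. The "in particular" assertion is a counting argument: by \pref{r to rj i}, every maximal independent set of $\G(R)$ is the full preimage of a maximal independent set of $\G(R/J(R))$ and thus has cardinality exactly $|J(R)|$ times that of its projection; conversely, every maximal independent set of $\G(R/J(R))$ arises this way. Hence constancy of size on one side is equivalent to constancy of size on the other. The only genuinely delicate step is proving $A + J(R) \subseteq A$, where one really uses that $J(R)$ consists of non-units; the rest is bookkeeping on cosets.
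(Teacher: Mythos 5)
Your proof is correct and follows essentially the same route as the paper: the key observation that $x\in\U(R)$ \ifof $\bar x\in \U\b(R/J(R)\b)$, the resulting transfer of independence between $A$, $A+J(R)$ and $\bar A$, and the counting $|A|=|\bar A|\,|J(R)|$ for the well-coveredness claim. You merely spell out the maximality and coset-closure details that the paper leaves implicit in ``From this, (i) and (ii) follow.''
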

\begin{proof}
Note that if $x \in R$ and $j\in J(R)$, then $x\in\U(R)$ if and only if $x+j\in \U(R)$ \ifof $x+J(R)\in \U\b(R/J(R)\b)$.
Therefore, $A$ is an independent set of $\G(R)$ if and only if $A+J(R)$ is an independent set of $\G(R)$ if and only if
$\bar{A}$ is an independent set of $\G\b(R/J(R)\b)$. From this, \pref{r to rj i} and \pref{r to rj ii} follow.


For the last part, observe that if $A$ is a maximal independent set of $\G(R)$, then by \pref{r to rj i}, $|A|=|\bar{A}| |J(R)|$ and if
$\bar{A}$ is a maximal independent set of $\G\b(R/J(R)\b)$, then $|A+J(R)|=|\bar{A}||J(R)|$. Hence, the assertion
follows immediately.
\end{proof}

Well-coveredness of unitary Cayley graphs is settled for finite commutative rings in \cite[Theorem 2.3]{yassemi} and for
$n$-square matrix rings over finite fields in Theorem \ref{mnf}. In what follows, we also settle it for certain
decomposable finite rings from which the main result can be achieved. First, a lemma.

\begin{lem}\label{a-b}
Suppose that $F$ is a finite field and $n>1$. If $A$ is a nonzero element of $M_n(F)$, then some non-unit $B\in M_n(F)$
exists such that $A-B$ is invertible.
\end{lem}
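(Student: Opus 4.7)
The plan is to construct an invertible matrix $C\in M_n(F)$ so that $A-C$ has a zero column; setting $B:=A-C$ then produces a non-unit $B$ with $A-B=C$ invertible, which is exactly what is demanded.

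To build such a $C$, I would exploit the hypothesis $A\neq 0$ to pick an index $j$ for which the $j$-th column $A_j$ of $A$ is nonzero in $F^n$. I would then extend the single vector $A_j$ to a basis $b_1,\ldots,b_n$ of $F^n$ with $b_j=A_j$ (this is always possible by the standard basis-extension argument, and $n>1$ ensures the extension is meaningful). Let $C$ be the matrix whose $i$-th column is $b_i$. Since the columns of $C$ form a basis of $F^n$, $C$ is invertible. By construction the $j$-th column of $B:=A-C$ is $A_j-b_j=0$, so $B$ has determinant zero and is a non-unit, while $A-B=C\in \U(M_n(F))$.

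I do not foresee any real obstacle: the statement is a purely linear-algebraic existence claim, and the construction is direct. The hypothesis $n>1$ is essentially harmless for the argument itself (in the degenerate case $n=1$ the only nonzero element of $M_1(F)=F$ is already a unit, so $B=0$ would work trivially); it presumably appears because the lemma is intended for the genuinely non-commutative setting of the subsequent sections, where the $n=1$ case is the uninteresting complete-graph situation handled by Proposition \ref{m2f}.
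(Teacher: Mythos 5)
Your proof is correct and is essentially the paper's argument viewed column-wise instead of row-wise: both constructions produce an invertible matrix $C$ that agrees with $A$ in one nonzero row/column (the paper completes the chosen row of $A$ explicitly via a reduced $k$-diagonal matrix $D_{k,i}(1,\ldots,1)$ so that $\det C=\pm a_{ij}$, while you complete the chosen column abstractly by basis extension), and then set $B=A-C$, which is singular because it has a zero row/column. Your parenthetical observation that the case $n=1$ also holds with $B=0$ is likewise accurate.
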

\begin{proof}
Since $A\ne 0$, a nonzero entry exists in $A$, say $a_{ij}$. Replace the $i$-th row of $A$ with zero row to obtain
$A'$ and let $k=j-i$. Letting $B=A'-D_{k,i}(1,\ldots,1)$, we see that $A-B$ can be obtained from $D_{k,i}(1,\ldots,1)$ by replacing the
$i$-th row of $D_{k,i}$ with the $i$-th row of $A$.  So $\det(A-B)=\pm a_{ij}\ne 0$. Moreover, $B$ contains a zero row,
the
$i$-th row, which completes the proof.
\end{proof}

Recall that if $G_1$ and $G_2$ are graphs on vertex sets $V_1$ and $V_2$, then their \emph{conjunction product} is the graph
on vertex set $V_1\times V_2$, in which, two vertices $(v_1,v_2)$ and $(u_1,u_2)$ are adjacent when $v_1$ is adjacent
to $u_1$ and $v_2$ is adjacent to $u_2$ in $G_1$ and $G_2$, respectively. It is routine to check that $\G(R_1\times
R_2)$ is the conjunction product of $\G(R_1)$ and $\G(R_2)$. In \cite{product}, well-coveredness of several products of
graphs including the conjunction product is studied.

\begin{lem}[{\cite[Theorem 4 \& Proposition 11]{product}}]\label{conj prod}
Suppose that $G_1$ and $G_2$ are graphs without isolated vertices and $I_1$ and $I_2$ are maximal independent sets in
$G_1$ and $G_2$, respectively. Then $I_1\times V(G_2)$ and $V(G_1)\times I_2$ are maximal independent sets in the
conjunction product of $G_1$ and $G_2$. Also if this conjuction product is well-covered, then
$G_1$ and $G_2$ are well-covered and $\alpha(G_1)|G_2|=\alpha(G_2)|G_2|.$
\end{lem}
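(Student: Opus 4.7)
The plan is to handle the two assertions separately, with the first part feeding directly into the second. For the independence of $I_1 \times V(G_2)$ in the conjunction product, I would take two distinct pairs $(v_1,v_2)$ and $(u_1,u_2)$ in this set; if they were adjacent then by definition $v_1$ would be adjacent to $u_1$ in $G_1$, but since the graph is simple this forces $v_1 \ne u_1$, contradicting the independence of $I_1$. So the set is independent, and by symmetry so is $V(G_1) \times I_2$.

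Next, for maximality of $I_1 \times V(G_2)$, I would pick an arbitrary $(w_1,w_2)$ outside the set, so that $w_1 \notin I_1$. Because $I_1$ is a maximal independent set in $G_1$, there exists some $v_1 \in I_1$ adjacent to $w_1$. Here is where the hypothesis that $G_2$ has no isolated vertices is essential: it guarantees some $v_2 \in V(G_2)$ adjacent to $w_2$, which then yields $(v_1,v_2) \in I_1 \times V(G_2)$ adjacent to $(w_1,w_2)$ in the conjunction product, obstructing any enlargement. The argument for $V(G_1) \times I_2$ is symmetric.

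For the second assertion, suppose the conjunction product is well-covered. Fix any maximal independent sets $I_1$ of $G_1$ and $I_2$ of $G_2$. By the first part, $I_1 \times V(G_2)$ and $V(G_1) \times I_2$ are both maximal independent sets of the product, so their cardinalities agree, giving
\[
|I_1|\cdot|V(G_2)| \;=\; |V(G_1)|\cdot|I_2|.
\]
Fixing $I_2$ and letting $I_1$ range over all maximal independent sets of $G_1$ forces every such $|I_1|$ to equal the constant $|V(G_1)|\cdot|I_2|/|V(G_2)|$, so $G_1$ is well-covered; by symmetry $G_2$ is also well-covered. Substituting $|I_1| = \alpha(G_1)$ and $|I_2| = \alpha(G_2)$ into the displayed equation yields the stated identity (correcting the apparent typo, the intended form being $\alpha(G_1)|V(G_2)| = \alpha(G_2)|V(G_1)|$).

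There is no genuinely hard step here; the one point that requires care is the maximality argument, which consumes both hypotheses (the maximality of $I_1$ in $G_1$ and the absence of isolated vertices in $G_2$), and dropping either hypothesis would break the proof.
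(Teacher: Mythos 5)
The paper does not prove this lemma at all: it is quoted verbatim (typo included) from Topp and Volkmann, so there is no internal proof to compare against. Your argument is the standard one and is correct in all three steps: independence of $I_1\times V(G_2)$ follows from independence of $I_1$ plus simplicity of $G_1$; maximality correctly consumes both the maximality of $I_1$ and the absence of isolated vertices in $G_2$; and equating $|I_1|\,|V(G_2)|=|V(G_1)|\,|I_2|$ over all choices of $I_1$ and $I_2$ gives well-coveredness of both factors and the identity $\alpha(G_1)|V(G_2)|=\alpha(G_2)|V(G_1)|$. Your reading of the displayed equation as a typo is also right --- it is the form actually used later in the proof of Proposition 3.5, where the authors write $|R|\,|F|^{n^2-n}=|M|\,|F|^{n^2}$.
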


\begin{prop}\label{prod}
Let $F$ be a finite field and $n>1$. Then, for any finite ring $R$, the graph $\G\b(R\times M_n(F)\b)$ is not
well-covered.
\end{prop}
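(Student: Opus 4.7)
The plan is to derive a contradiction from the assumption that $\G(R \times M_n(F))$ is well-covered. First, I observe that $\G(R \times M_n(F))$ is the conjunction product of $\G(R)$ and $\G(M_n(F))$, and both of these graphs have no isolated vertices (every vertex $x$ is adjacent to $x-1$, since $1 \in \U$). Hence Lemma \ref{conj prod} applies and yields that both $\G(R)$ and $\G(M_n(F))$ are well-covered and that $\alpha(\G(R)) \cdot |M_n(F)| = \alpha(\G(M_n(F))) \cdot |R|$. By Theorem \ref{mnf}, the well-coveredness of $\G(M_n(F))$ forces $n \leq 2$, so $n = 2$. Using $\alpha\b(\G(M_2(F))\b) = |F|^2$ from Lemma \ref{ess}\pref{ess 1}, the size identity reduces to $|R| = \alpha(\G(R)) \cdot |F|^2$.

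Next, I would construct a second maximal independent set of $\G(R \times M_2(F))$ whose cardinality does not match $|R| \cdot |F|^2$, contradicting well-coveredness. Write $\mathcal{J}$ for the set of non-invertible matrices in $M_2(F)$, set $\mathcal{J}_0 = \mathcal{J} \sm \{0\}$, and let $R_{nu}$ denote the set of non-units of $R$. Choose $R'$ to be maximal among independent sets of $\G(R)$ contained in $R_{nu}$, insisting $0 \in R'$, and set $\alpha_{nu} = |R'|$. Define
\[ M = (R \times \{0\}) \cup (\{0\} \times \mathcal{J}) \cup \b((R' \sm \{0\}) \times \mathcal{J}_0\b). \]
Independence of $M$ is a pairwise check: in each difference of two elements of $M$, at least one coordinate is forced into a set of non-units. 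For maximality, assume $(r, A) \notin M$. If $A$ is invertible, then $(r, A)$ is adjacent to $(r - 1, 0) \in M$. Otherwise $A$ is non-invertible and nonzero (since the $A = 0$ case already lies in $M$), and I split on $r$: when $r$ is a unit, Lemma \ref{a-b} supplies a non-unit $A'$ with $A - A'$ invertible, so that $(0, A') \in M$ is adjacent to $(r, A)$; when $r$ is a non-unit lying outside $R'$, the maximality of $R'$ inside $R_{nu}$ produces an $r' \in R' \sm \{0\}$ with $r - r'$ a unit, which combined with Lemma \ref{a-b} gives an adjacency into $(R' \sm \{0\}) \times \mathcal{J}_0$.

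The three pieces of $M$ overlap only in $(0,0)$, so $|M| = |R| + \alpha_{nu} \cdot |\mathcal{J}_0|$, and a direct count gives $|\mathcal{J}_0| = (|F| - 1)(|F| + 1)^2$. If $\G(R \times M_2(F))$ were well-covered, $|M|$ would equal the size $|R| \cdot |F|^2$ of $R \times I'$ for any maximal independent set $I'$ of $\G(M_2(F))$; this simplifies to $\alpha_{nu}(|F| + 1) = |R|$. Combined with $|R| = \alpha(\G(R)) \cdot |F|^2$ it forces $\alpha_{nu} = \alpha(\G(R)) \cdot |F|^2 / (|F| + 1)$, and the obvious inequality $\alpha_{nu} \leq \alpha(\G(R))$ then gives $|F|^2 \leq |F| + 1$, which fails for every finite field. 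The main obstacle is the verification of maximality of $M$, particularly the final subcase, where the maximality of $R'$ inside $R_{nu}$ and Lemma \ref{a-b} must be invoked together to locate the required adjacency.
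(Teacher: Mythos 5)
Your proof is correct, but it executes the common strategy (use Lemma \ref{conj prod} to pin down the size every maximal independent set must have, then exhibit a maximal independent set of a different size) with genuinely different bookkeeping. The paper takes $M$ a maximal independent set of all of $\G(R)$ and $X$ the set of \emph{all} non-units of $M_n(F)$, and shows $N=(R\times\{0\})\cup(M\times X)$ is maximal independent with $|N|<|M|\,|F|^{n^2}$ by a direct count; this works uniformly for every $n>1$ and never needs Theorem \ref{mnf}. You instead first invoke Theorem \ref{mnf} to force $n=2$ (legitimate, no circularity, since that theorem is proved independently of Proposition \ref{prod}), and then build a three-piece set $(R\times\{0\})\cup(\{0\}\times\mathcal{J})\cup\b((R'\sm\{0\})\times\mathcal{J}_0\b)$ around a maximal independent subset $R'$ of the non-units of $R$; your maximality check is sound (in particular, in the last subcase the $B$ supplied by Lemma \ref{a-b} is automatically nonzero because $A$ is singular, so $(r',B)$ really lands in $(R'\sm\{0\})\times\mathcal{J}_0$), and your count $|\mathcal{J}_0|=(|F|-1)(|F|+1)^2$ together with the Topp--Volkmann identity yields $q^2\le q+1$, a valid contradiction. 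What the paper's route buys is uniformity in $n$ and a shorter count with one fewer auxiliary object ($R'$ and the hypothesis $n=2$ are not needed); what yours buys is a slightly more elementary final inequality. Note that both you and the paper silently correct the typo in the statement of Lemma \ref{conj prod}, which should read $\alpha(G_1)|G_2|=\alpha(G_2)|G_1|$.
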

\begin{proof}
Let $M$ be a maximal independent set of $\G(R)$ and $X$ be the set of all non-unit elements of $M_n(F)$. We show that
$N=(R\times \{0\})\cup (M\times X)$ is a maximal independent set of $\G\b(R\times M_n(F)\b)$. Noting that units of
$R\times M_n(F)$ are of the form $(a,b)$, such that $a\in \U(R)$ and $b\in \U\b(M_n(F)\b)$, it is not difficult to see that
$N$ is an independent set. If $N\cup \{(r,A)\}$ is an independent set for some $(r,A)\in \b(R\times M_n(F)\b)\sm N$,
then $(r,A)-(r-1,0)=(1,A)$ implies that $A$ is not a unit element of $M_n(F)$ and hence $r\notin M$. Since $A\ne 0$,
utilizing Lemma \ref{a-b} we can find some non-unit $B\in M_n(F)$ such that $A-B$ is invertible.    Since $M$ is
maximal and $r\notin M$, there is an $m\in M$ such that $r-m\in \U(R)$. But then $(m,B)\in N$ and
$(r,A)-(m,B)=(r-m,A-B)$ is a unit element of $R\times M_n(F)$, that is, $(r,A)$ is adjacent to $(m,B)$,  which is in
contradiction with the independence of $N\cup\{(r,A)\}$. Consequently, $N$ is a maximal independent set.

Now, if $\G\b(R\times M_n(F)\b)$ is
well-covered, according to  Lemma \ref{conj prod}, we observe that
	$|R||F|^{n^2-n}=|M||F|^{n^2}$. Therefore, letting $|F|=q$, we have $|R|=q^n|M|$. Noting that $$|\U\b(M_n(F)\b)|=(q^n-1)(q^n-q)\cdots (q^n-q^{n-1}),$$
and that $(R\times \{0\}) \cap (M\times X)=M\times \{0\}$, it can be seen that:
	\begin{align*}
		|N|&=|R|+|M|\b(q^{n^2}-(q^n-1)\cdots(q^n-q^{n-1})-1\b)\\
		&=|M|\Big(q^{n^2}-(q^n-1)\b((q^n-q)\cdots(q^n-q^{n-1})-1\b)\Big)<|M|q^{n^2},
	\end{align*}
which is a contradiction, since by Lemma \ref{conj prod}, $M\times M_n(F)$ is a maximal independent set of
$\G\b(R\times M_n(F)\b)$. Hence the result follows.
\end{proof}

Now, we are ready to present the main results of this section.

\begin{thm}
	Let $R$ be a finite ring. Then the unitary Cayley graph of $R$ is well-covered if and only if one of the following holds:
\begin{enumerate}
\item $R/J(R)$ is isomorphic to	$F$, $F\times F$, or $M_2(F)$ for some finite field $F$;
\item $R/J(R)$ is isomorphic to $\mathbb{Z}_2^k$ for some $k\in \mathbb{N}$.
\end{enumerate}
\end{thm}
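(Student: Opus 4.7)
The plan is to reduce to the semisimple case using Proposition \ref{r to rj}, decompose the resulting ring via Wedderburn--Artin plus Wedderburn's little theorem, and then assemble the conclusion from Theorem \ref{mnf}, Proposition \ref{prod}, and the existing commutative characterization in \cite[Theorem 2.3]{yassemi}.

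First, Proposition \ref{r to rj} gives that $\G(R)$ is well-covered \ifof $\G\b(R/J(R)\b)$ is, so I would replace $R$ by $R/J(R)$ and assume $R$ is semisimple. By Lemma \ref{ess}\pref{ess 2} together with Wedderburn's little theorem (every finite division ring is a field), one can write $R \cong M_{n_1}(F_1)\times\cdots\times M_{n_r}(F_r)$ for finite fields $F_i$ and positive integers $n_i$.

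Now I would split into cases. If $r=1$, Theorem \ref{mnf} says $\G(R)$ is well-covered \ifof $n_1\le 2$, placing $R$ under case (i) as $F$ or $M_2(F)$. If $r\ge 2$ and some $n_i\ge 2$, grouping the remaining factors as $S$ and writing $R\cong S\times M_{n_i}(F_i)$ lets me apply Proposition \ref{prod} to rule out well-coveredness. The remaining possibility is $r\ge 2$ with all $n_i=1$, in which case $R$ is a commutative product of finite fields, and \cite[Theorem 2.3]{yassemi} forces $R\cong F\times F$ for some finite field $F$ or $R\cong \mathbb{Z}_2^k$ for some $k\in\mathbb{N}$, giving cases (i) and (ii) respectively.

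For the converse, I would verify each listed form does produce a well-covered $\G(R)$: the complete graph $\G(F)$ is trivially well-covered, $\G\b(M_2(F)\b)$ is handled by Proposition \ref{m2f}, and the two commutative shapes $F\times F$ and $\mathbb{Z}_2^k$ follow from \cite[Theorem 2.3]{yassemi}; Proposition \ref{r to rj} then lifts the conclusion to $R$. The main obstacle at this stage is essentially organizational, since the hard work is already packaged in Theorem \ref{mnf} and in Propositions \ref{r to rj} and \ref{prod}; the only care needed is making the case split exhaustive and invoking the commutative black box from \cite{yassemi} correctly.
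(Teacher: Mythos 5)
Your proposal is correct and follows essentially the same route as the paper: reduce to $R/J(R)$ via Proposition \ref{r to rj}, decompose by Wedderburn--Artin and Wedderburn's little theorem, dispose of the all-$n_i=1$ case with \cite[Theorem 2.3]{yassemi}, use Proposition \ref{prod} to exclude decomposable rings with a matrix factor, and finish with Theorem \ref{mnf}. Your case split by $r$ is only a cosmetic reorganization of the paper's argument, and your explicit verification of the converse is the same (already-established) material.
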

\begin{proof}
First by utilizing Proposition \ref{r to rj}, we see that $\G(R)$ is well-covered if and only if so is
$\G\b(R/J(R)\b)$. Since $R$ is an Artinian ring and $J\b(R/J(R)\b)=0$, thus $R/J(R)$ is a semisimple ring. Hence by Lemma \ref{ess}\pref{ess 2} we have $R/J(R)\cong
M_{n_1}(D_1)\times\cdots\times M_{n_r}(D_r)$ for some division rings $D_i$ and positive integers $n_i$. Also,
Wedderburn's Little Theorem states that every finite division ring is a field and therefore so are the $D_i$'s. Now, if
all the $n_i$'s are equal to $1$, then $R/J(R)$ is a commutative ring and the result follows from \cite[Theorem
2.3]{yassemi}. Hence we assume that there is some $n_i>1$. Using Proposition \ref{prod}, one can observe that if $R/J(R)$
is well-covered, then $R/J(R)\cong M_n(F)$ for some finite field $F$
and positive integer $n$. Now, by applying Theorem \ref{mnf} the assertion follows.
\end{proof}

The following example shows that there are several non-commutative rings with a well-covered unitary Cayley graph.
\begin{ex}
Let $R=T_n(F)$ be the ring of upper triangular matrices over the finite field $F$. Since $J(R)$ is the set of upper
triangular matrices which have zero on the main diagonal, we have $R/J(R)\cong F^n$. Thus $\G\b(T_n(F)\b)$ is well-covered
\ifof either $n\leq 2$ or $F=\z_2$. As another example note that $\z_m/J(\z_m)\cong \z_{m'}$, where $m'$ is the
squarefree part of $m$. Thus if we set $R=M_n(\z_m)$ for some $n\geq 2$, then $R/J(R)\cong M_n(\z_{m'})$ and is
indecomposable \ifof $\z_{m'}$ is so \ifof $m'$ is prime \ifof $m$ is a prime power. Therefore, $\G(R)$ is well-covered
\ifof $n=2$ and $m$ is a prime power. More generally, if $n\geq 2$, the ring $M_n(R)$ for an arbitrary commutative ring
$R$ is well-covered \ifof $n=2$ and $R$ is a local ring, because
$M_n(R)/J\b(M_n(R)\b)\cong M_n\b(R/J(R)\b)$ is indecomposable \ifof $R/J(R)$ is indecomposable \ifof $R/J(R)$  is a field.
\end{ex}

Next we characterize rings $R$ with  a CM unitary Cayley graph. For this, we need a necessary
combinatorial condition for Cohen-Macaulayness of graphs which is stated in terms of simplicial
complexes. Recall that a \emph{simplicial complex} on vertex set $[n]$ means a family $\Delta$ of
subsets of $[n]$ with the property that if $A\in \Delta$ and $B\se A$, then $B\in \Delta$. Each
element of $\Delta$ is called a \emph{face} of $\Delta$ and maximal elements of $\Delta$ are called
\emph{facets} of $\Delta$. Dimension of a face $A$ of $\Delta$ is $|A|-1$ and if all facets of
$\Delta$ have the same dimension, then $\Delta$ is called \emph{pure}. Also $\dim \Delta$ is
defined as the maximum dimension of all faces of $\Delta$. It is easy to see that if $G$ is a graph
on vertex set $[n]$, then the family of all independent sets of $G$, denoted by $\ind(G)$ is a
simplicial complex called the \emph{independence complex} of $G$. It is clear that $G$ is
well-covered \ifof $\ind(G)$ is pure. Now assume that $S=K[x_1,\dots, x_n]$ and for each $F\se
[n]$, let $x_F=\prod_{i\in F} x_i$ and $I_\Delta=\lg x_F|F\se [n], F\notin \Delta\rg$ be the
\emph{Stanley-Reisner ideal} of $\Delta$ in $S$. It is said that $\Delta$ is CM (over $K$), when
$S/I_\Delta$ is a CM ring. Thus it follows that a graph $G$ is CM \ifof $\ind(G)$ is CM. A pure
simplicial complex $\Delta$ is called \emph{connected in codimension 1}, when for each two facets
$F,G$ of $\Delta$, there is a sequence $F=F_0, F_1, \ldots, F_t=G$ of facets of $\Delta$, such that
$|F_{i-1} \cap F_{i}|=|F_i|-1$ for each $1\leq i\leq t$. It is well-known that if $\Delta$ is CM
over some field, then it is pure and connected in codimension 1 (see \cite[Lemma 9.1.12]{hibi}). Finally,
recall that the \emph{pure $d$-skeleton} $\Delta^{[d]}$ of $\Delta$ is the pure simplicial complex whose
facets  are all $d$-dimensional faces of $\Delta$. Now we study when independence complex of
$\G(R)$ is  connected in codimension 1.

\begin{lem}\label{st conn}
	If $n\geq 2$, $\Delta$ is the independence complex of $\G\b(M_n(F)\b)$ and $d=\dim \Delta$, then
	$\Delta^{[d]}$ is not connected in codimension 1.
\end{lem}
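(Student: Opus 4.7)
The plan is to exhibit a facet $I$ of $\Delta^{[d]}$ which is isolated in the codimension-$1$ adjacency graph on facets, and then to observe that $\Delta^{[d]}$ has other facets, forcing disconnection. Set $I=\{A\in M_n(F)\mid \text{first row of }A\text{ is zero}\}$. Any difference of two elements of $I$ has zero first row and is therefore non-invertible, so $I$ is independent; and $|I|=|F|^{n(n-1)}=\alpha\b(\G(M_n(F))\b)$ by Lemma \ref{ess}\pref{ess 1}, so $I$ is a facet of $\Delta^{[d]}$. The same argument, using $n\geq 2$, shows that the analogous set $I'$ defined by vanishing of the second row is a distinct facet of $\Delta^{[d]}$.

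The decisive step will be to show that no facet of $\Delta^{[d]}$ is codimension-$1$ adjacent to $I$. Suppose, towards a contradiction, that $J=(I\sm\{A_0\})\cup\{B_0\}$ is a facet with $A_0\in I$ and $B_0\notin I$. Write the rows of $B_0$ as $v,w_2,\ldots,w_n$ with $v\neq 0$, and parameterize $A\in I$ by its last $n-1$ rows $(a_2,\ldots,a_n)\in(F^n)^{n-1}$. For $J$ to be independent, $B_0-A$ must be non-invertible for every $A\in I\sm\{A_0\}$, that is, for all but at most one choice of $(a_2,\ldots,a_n)$.

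Since $B_0-A$ has rows $v,w_2-a_2,\ldots,w_n-a_n$, the change of variable $b_i=w_i-a_i$ turns the count of $A\in I$ with $B_0-A$ invertible into the count of tuples $(b_2,\ldots,b_n)\in(F^n)^{n-1}$ for which $\{v,b_2,\ldots,b_n\}$ is linearly independent. A standard basis-extension count gives $(q^n-q)(q^n-q^2)\cdots(q^n-q^{n-1})$ such tuples, where $q=|F|$; for $n\geq 2$ and $q\geq 2$ this is at least $q^n-q\geq 2$. Hence at least two distinct elements $A\in I$ yield an invertible $B_0-A$, so deleting any single $A_0$ cannot restore independence of $J$---a contradiction. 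Therefore $I$ is isolated in the codimension-$1$ adjacency graph on facets of $\Delta^{[d]}$, and since $I'\neq I$ is also a facet, $\Delta^{[d]}$ is not connected in codimension $1$. The main conceptual hurdle is the rigid choice of $I$: once one commits to the vanishing-first-row structure, every candidate $B_0\notin I$ produces many invertible differences with elements of $I$, so no single-element swap can succeed; the underlying counting is then routine linear algebra.
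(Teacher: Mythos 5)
Your proof is correct and takes essentially the same approach as the paper's: both arguments isolate the facet of matrices with vanishing first row by showing that no maximum independent set can differ from it in exactly one element, and then note that a second such facet (vanishing second row) exists. The only difference is in the key step: the paper produces two explicit elements of $I$ adjacent to a matrix $B_0$ with nonzero first row via two distinct basis extensions, while you count all $(q^n-q)(q^n-q^2)\cdots(q^n-q^{n-1})\ge 2$ of them; both are valid.
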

\begin{proof}
	Let $R=M_n(F)$. Note that by Lemma \ref{ess}\pref{ess 1}, $d=|F|^{n^2-n}-1$. Since the family of
	matrices with a fixed zero row is a maximal independent set of $\G(R)$ with size $d+1=|F|^{n^2-n}$,
	we see that $\Delta$ has more than one facet. Thus if $\Delta^{[d]}$ is connected in codimension 1,
	for each facet $A$ of $\Delta$, there must exist a facet $B$ of $\Delta$ with $|A\cap B|=d$. Below we
	show that this does not hold and hence the claim follows.
	
	Let $\FF$ be the family of all matrices with first row equal to zero and $M$ be an arbitrary
	element of $\FF$. We prove that there is no independent set containing $\FF\sm\{M\}$ other  than
	$\FF$. On the contrary, if $\FG$ is such an independent set, then there is a matrix $N\in \FG$ with
	nonzero first row $r_1\in F^n$. Extend $\{r_1\}$ to two different bases $\{r_1,r'_2, \ldots,
	r'_n\}$ and $\{r_1,r''_2,\ldots, r''_n\}$ of $F^n$. This is possible since $n\geq 2$. Suppose that the $i$-th row of $N$ is $r_i$. Let $X$ and $X'$ be matrices with zero first rows
	and $i$-th rows equal to $r_i-r'_i$ and $r_i-r''_i$, respectively. At least one of $X$ and $X'$, say
	$X$, is in $\FF\sm\{M\}\se \FG$. But then the rows of $N-X$ are linearly independent and $N$ is
	adjacent to $X$ in $\G(R)$, contradicting independence of $\FG$. From this the result follows.
\end{proof}

\begin{lem}\label{st conn J=0}
Suppose that $\Delta$ is the independence complex of  $\G(R)$ and $d=\dim \Delta$. If $J(R)\neq 0$, then
	$\Delta^{[d]}$ is not connected in codimension 1.
\end{lem}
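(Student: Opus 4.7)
The plan is to leverage Proposition \ref{r to rj}\pref{r to rj i}, which forces every maximal independent set of $\G(R)$ to be a union of cosets of $J(R)$; in particular its cardinality is divisible by $|J(R)|$. The facets of $\Delta^{[d]}$ are precisely the independent sets of $\G(R)$ of size $d+1$, each of which is automatically maximal, so every facet of $\Delta^{[d]}$ has cardinality divisible by $|J(R)|$. Hence $d+1=\alpha\b(\G(R)\b)$ is a positive multiple of $|J(R)|\geq 2$, which forces $d\equiv -1 \pmod{|J(R)|}$; crucially, $|J(R)|$ does not divide $d$.

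Next I would exhibit at least two distinct facets, so that the codimension $1$ connectivity question is non-trivial. For any maximum independent set $F$, its translate $F+1$ is an independent set of the same cardinality (differences are preserved under translation), hence also a facet. The two are distinct, because $F=F+1$ would yield some $x\in F$ with $\{x,x+1\}\subseteq F$, contradicting independence of $F$ since $(x+1)-x=1\in\U(R)$.

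Finally, I claim that no pair of distinct facets of $\Delta^{[d]}$ admits a codimension $1$ path between them. Indeed, for any two distinct facets $F,G$, both are unions of cosets of $J(R)$, hence so is $F\cap G$; therefore $|F\cap G|$ is a multiple of $|J(R)|$. Since $|J(R)|$ does not divide $d$, we conclude $|F\cap G|\neq d$. But a codimension $1$ path from $F$ to $G$ would, at its very first step, demand a facet $F_1$ with $|F\cap F_1|=d$, which the previous sentence rules out. Consequently $\Delta^{[d]}$ is not connected in codimension $1$.

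The crux is the opening divisibility observation, which converts the purely combinatorial codimension $1$ condition (sharing exactly $d$ vertices) into an arithmetic obstruction (a quantity that must be a multiple of $|J(R)|$ cannot equal $d\equiv -1\pmod{|J(R)|}$). Once this dichotomy is noticed, the production of two distinct facets and the exclusion of the bridging facet $F_1$ are both essentially one-line arguments, and the proof requires no delicate combinatorial analysis of the structure of $\G(R)$ itself.
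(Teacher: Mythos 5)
Your proof is correct and follows essentially the same route as the paper's: both use Proposition \ref{r to rj} to see that every facet of $\Delta^{[d]}$ is a union of cosets of $J(R)$, so the intersection of two facets has cardinality divisible by $|J(R)|$, whereas connectivity in codimension 1 would require an intersection of size $d \equiv -1 \pmod{|J(R)|}$. Your explicit translation argument $F \mapsto F+1$ producing two distinct facets just fills in a step the paper leaves implicit.
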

\begin{proof}
According to Proposition \ref{r to rj}, facets of $\Delta^{[d]}$ are of unions of cosets of $J(R)$ in $R$. Thus at
least two such facets exist and if $A$ and $B$ are two such facets, $|A\cap B|$ is a multiple of $|J(R)|$. But if
$\Delta^{[d]}$ is connected in codimension 1, for each facet $A$ of $\Delta$, there must
exist a facet $B$ of $\Delta$ with $|A\cap B|=d=|A|-1$. This is not a multiple of $|J(R)|$, unless $|J(R)|=1$, as required.
\end{proof}

Now we can state when $\G(R)$ is a CM graph. We should mention that for commutative rings, this result is proved in
\cite[Theorem 3.3]{yassemi}. Also recall that a simplicial complex $\Delta$ is said to be \emph{shellable} when its
facets can be sorted as $F_1,\ldots, F_t$, such that for each $i>1$ the simplicial complex $\lg F_1, \ldots, F_{i-1}\rg
\cap \lg F_i \rg$ is pure and has dimension
$\dim F_i -1$, where $\lg F_1, \ldots, F_{s}\rg$ means the simplicial complex whose facets are exactly $F_1,\ldots, F_s$. We say a graph is shellable whenever
its independence complex is so.
\begin{thm}\label{cayleyCM}
	Suppose that $R$ is a finite ring. The following are equivalent.
\begin{enumerate}
\item \label{cayleyCM1} The unitary Cayley graph of $R$ is CM.
\item \label{cayleyCM2} The unitary Cayley graph of $R$ is well-covered and shellable.
\item \label{cayleyCM3} Either $R$ is a field or $R\cong \z_2^k$ for some $k\in \n$.
\end{enumerate}
\end{thm}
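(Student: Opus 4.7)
The plan is to establish the cycle $(2)\Rightarrow(1)\Rightarrow(3)\Rightarrow(2)$, combining the well-coveredness classification from the previous theorem with the two Stanley--Reisner connectedness lemmas of this section (Lemmas \ref{st conn} and \ref{st conn J=0}), a direct facet analysis for $F\times F$, and the standard fact that a pure shellable simplicial complex is Cohen--Macaulay over every field.

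For $(2)\Rightarrow(1)$ I would simply invoke the classical result that a pure shellable complex is CM, applied to $\ind\b(\G(R)\b)$. For $(1)\Rightarrow(3)$, which is the heart of the proof, I would assume $\G(R)$ is CM, so that $\Delta=\ind\b(\G(R)\b)$ is pure and connected in codimension $1$. Lemma \ref{st conn J=0} then forces $J(R)=0$, whence $R\cong R/J(R)$ is semisimple, and by the preceding theorem $R$ must be isomorphic to one of $F$, $F\times F$, $M_2(F)$, or $\z_2^k$. Lemma \ref{st conn} immediately rules out $M_2(F)$. To exclude $F\times F$ with $|F|\geq 3$, I would verify directly that the facets of $\Delta$ in that case are exactly the ``rows'' $\{a\}\times F$ and ``columns'' $F\times\{b\}$ (each of size $|F|$): any independent set containing two elements with distinct first coordinates must have all of its elements sharing a fixed second coordinate, by the adjacency rule in $\G(F\times F)$. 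Any two distinct such facets then intersect in at most one vertex, so codimension-$1$ connectedness fails since consecutive facets would need to share $|F|-1\geq 2$ vertices. This forces $R$ to be a field or $R\cong \z_2^k$.

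For $(3)\Rightarrow(2)$, when $R$ is a field the graph $\G(R)$ is complete, so $\ind\b(\G(R)\b)$ is $0$-dimensional and trivially pure shellable. When $R\cong \z_2^k$, the only unit is $(1,\ldots,1)$, so $\G(R)$ is a perfect matching on $2^k$ vertices (each $x$ paired with $x+(1,\ldots,1)$), and its independence complex is the join of $2^{k-1}$ copies of the discrete two-point complex; this is pure shellable because the join of pure shellable complexes is pure shellable. The principal obstacle will be the explicit facet analysis for $\G(F\times F)$: identifying all maximal independent sets and bounding pairwise intersections, which is precisely the step that distinguishes the CM case $F=\z_2$ (a four-vertex perfect matching) from the non-CM case $|F|\geq 3$. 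All other steps either reduce to previously established results in this paper or follow from standard join/shellability machinery.
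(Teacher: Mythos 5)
Your proposal is correct and follows the same overall skeleton as the paper's proof: the cycle $(2)\Rightarrow(1)\Rightarrow(3)\Rightarrow(2)$, with $J(R)=0$ forced by Lemma \ref{st conn J=0}, the case $M_n(F)$, $n\geq 2$, killed by Lemma \ref{st conn} together with purity, and the remaining structure controlled by the well-coveredness classification. The genuine difference is how the commutative semisimple cases are handled. The paper simply cites \cite[Theorem 3.3]{yassemi} both for $(3)\Rightarrow(2)$ and for ruling out the remaining commutative possibilities in $(1)\Rightarrow(3)$, and it disposes of the non-commutative decomposable case directly via Proposition \ref{prod} rather than routing through the well-covered theorem as you do (these are equivalent in substance). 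You instead give self-contained arguments: the facet analysis of $\G(F\times F)$ showing the facets are exactly the rows $\{a\}\times F$ and columns $F\times\{b\}$, pairwise intersecting in at most one vertex, so connectedness in codimension $1$ fails for $|F|\geq 3$; and the explicit observation that $\G(\z_2^k)$ is a perfect matching whose independence complex is a join of copies of $S^0$, hence pure shellable. Both of these checks are correct (your claim that an independent set with two elements differing in the first coordinate must have constant second coordinate is exactly the adjacency rule for the conjunction product of two complete graphs, and the $|F|=2$ case correctly survives as $\z_2^2$). What your route buys is independence from the cited result of Kiani, Maimani and Yassemi at the cost of a little extra case analysis; what the paper's route buys is brevity.
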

\begin{proof} \pref{cayleyCM3} \give \pref{cayleyCM2} is part of \cite[Theorem 3.3]{yassemi} (note that there, by
shellable, it is meant pure shellable). \pref{cayleyCM2} \give \pref{cayleyCM1} is well-known (see, for example,
\cite[Theorem 8.2.6]{hibi}). Now assume \pref{cayleyCM1} holds. To prove \pref{cayleyCM3},
 by Lemma \ref{st conn J=0} and \cite[Lemma 9.1.12]{hibi}, we may assume that $J(R)=0$.
	If $R$ is commutative, then \pref{cayleyCM3} holds by \cite[Theorem 3.3]{yassemi}. If $R$ is a
	non-commutative indecomposable ring, then $R\cong M_n(F)$ for some $n\geq 2$ and it follows from
	Lemmas \ref{st conn} and \ref{mnf} that $\ind\b(\G(R)\b)$ is not a pure complex connected in
	codimension 1 and hence it is not CM. Now if $R$ is non-commutative but it is decomposable, then it
	follows from Proposition \ref{prod} that $\G(R)$ is not well-covered and hence it is not CM.
\end{proof}

We say that a graph $G$ is \emph{Gorenstein} (over $K$) when the ring $S/I(G)$ is Gorenstein.
\begin{cor}\label{GorenCay}
For a finite ring $R$, the graph $\G(R)$ is Gorenstein \ifof $R\cong \z_2^k$ for some $k\in \n$.
\end{cor}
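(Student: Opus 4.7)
The plan is to leverage Theorem \ref{cayleyCM} directly: since every Gorenstein ring is Cohen--Macaulay, a Gorenstein $\G(R)$ is in particular Cohen--Macaulay, so Theorem \ref{cayleyCM} already forces $R$ to be either a finite field or $R\cong\z_2^k$ for some $k\in\n$. Thus the corollary reduces to two tasks: confirming that $\G(\z_2^k)$ is genuinely Gorenstein for every $k$, and ruling out fields of order at least $3$.

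For the reverse direction ($R\cong\z_2^k$), I would observe that the only unit of $\z_2^k$ is the all-ones vector $\mathbf{1}=(1,\ldots,1)$, so each vertex $x$ of $\G(\z_2^k)$ is adjacent precisely to $x+\mathbf{1}$. Therefore $\G(\z_2^k)$ is a perfect matching on $2^k$ vertices, consisting of $2^{k-1}$ edges with pairwise disjoint vertex sets. Its edge ideal is generated by $2^{k-1}$ monomials whose supports are pairwise disjoint, so it is a monomial complete intersection and $S/I\b(\G(\z_2^k)\b)$ is a complete intersection, hence Gorenstein.

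For the forward direction, suppose $R=F$ is a finite field with $|F|=q$; then $\G(R)=K_q$, whose independence complex consists of the $q$ singletons as facets. The Stanley--Reisner ring is
\[
A \;=\; K[x_1,\ldots,x_q]/(x_ix_j : i\neq j),
\]
a one-dimensional Cohen--Macaulay ring with minimal primes $P_i=(x_j:j\neq i)$. The linear form $y=x_1+\cdots+x_q$ avoids every $P_i$, so it is a non-zero-divisor on $A$, and using the identity $yx_i=x_i^2$ in $A$ one identifies
\[
A/yA \;\cong\; K[x_1,\ldots,x_{q-1}]/(x_1,\ldots,x_{q-1})^2,
\]
whose socle has $K$-dimension $q-1$. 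Since $A$ is Gorenstein if and only if this socle is one-dimensional, we obtain $q=2$, and hence $R\cong\z_2=\z_2^1$.

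I expect the main technical point to be the socle computation for $A/yA$, though it is elementary once the Artinian reduction is identified. An alternative route avoiding the explicit computation is to note that the $h$-vector of the pure $0$-dimensional complex on $q$ points equals $(1,q-1)$, and Gorensteinness of a Cohen--Macaulay complex forces the $h$-vector to be palindromic, again yielding $q=2$.
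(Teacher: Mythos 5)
Your argument is correct and follows the same skeleton as the paper's proof: both reduce via Theorem \ref{cayleyCM} (Gorenstein implies CM) to the two cases where $R$ is a field or $R\cong\z_2^k$, and then settle each case. The difference is that the paper settles them by citing two well-known facts --- that the only complete Gorenstein graphs are $K_1$ and $K_2$, and that a disjoint union of Gorenstein graphs is Gorenstein (applied to $\G(\z_2^k)$, which is a perfect matching) --- whereas you prove exactly the instances needed: the complete-intersection observation for the matching and the Artinian reduction with the socle computation for $K_q$ are both correct, so your version is self-contained where the paper's is by citation. The only point to flag is your alternative $h$-vector remark: palindromicity of the $h$-vector is a consequence of the Gorenstein* property, not of Gorensteinness of an arbitrary CM complex (cones over Gorenstein* complexes are Gorenstein with non-palindromic $h$-vector); it applies here only because the independence complex of $K_q$ with $q\ge 2$ has no cone points, so Gorenstein and Gorenstein* coincide.
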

\begin{proof}
The result follows from Theorem \ref{cayleyCM}, since every Gorenstein ring is CM and because of the well-known facts that the only complete Gorenstein graphs are $K_1$ and $K_2$ and
disjoint union of Gorenstein graphs is Gorenstein.
\end{proof}

It should be mentioned that it looks there is a small inaccuracy in \cite[Theorem  3.4]{yassemi}. There it is claimed that if $R$ is a field, then $\G(R)$ is Gorenstien. But
when $R$ is a field, $\G(R)$ is a complete graph and hence it is Gorenstein \ifof $\G(R)$ is $K_2$ and hence $R\cong \z_2$.


\end{document}